\newtheorem{lemma}{Lemma}[section]
\newtheorem{lemm}[lemma]{Lemma}
\newtheorem{coro}[lemma]{Corollary}
\newtheorem{theo}[lemma]{Theorem}
\begin{document} 
\title{Flat bundles with complex analytic holonomy}
\author{I. Chatterji}
\author{G. Mislin}
\author{Ch. Pittet}
\address{MAPMO Universit\'e d'Orl\'eans, Orl\'eans}    
\email{Indira.Chatterji@univ-orleans.fr}
\address{Department of Mathematics ETHZ, Z\"urich}    
\curraddr{Department of Mathematics, Ohio State University}
\email{mislin@math.ethz.ch}
\address{LATP UMR 7353 CNRS Aix-Marseille Universit\'e}
\email{pittet@cmi.univ-mrs.fr}
\date{August 6, 2013}
\begin{abstract}
Let $G$ be a connected complex Lie group. We show that any flat principal
$G$-bundle over any finite $CW$-complex pulls back to
a trivial bundle over some finite covering space of
the base space if and only if each  real characteristic class of positive degree of $G$ vanishes. A third equivalent condition is that the derived group of the radical
of $G$ is simply connected. As a corollary, the same conditions are equivalent if $G$ is a connected amenable  Lie group. In particular, if $G$ is a  connected compact  Lie group then any flat principal
$G$-bundle over any finite $CW$-complex pulls back to
a trivial bundle over some finite covering space of
the base space. 
\end{abstract}
\maketitle
\section{Introduction}
Let $G$ be a  Lie group. A principal $G$-bundle
over a connected $CW$-complex $X$ is called {\sl flat}, if there is a 
homomorphism $$\rho: \pi_1(X)\to G,$$ the holonomy of the flat bundle, such
that the given bundle is equivalent to the $G$-bundle $\tilde{X}\times_{\rho} G\to X$
canonically associated with the universal cover $\tilde X$ of $X$; the
notation $\tilde{X}\times_\rho G$ refers to the orbit space of $\tilde{X}\times G$
under the $\pi_1(X)$-action given by $$\gamma(x,g)=(\gamma x, \rho(\gamma)g).$$
Flat $G$-bundles are
characterized by the fact that the
classifying map $\theta: X\to BG$ factors as $$X\to B\pi_1(X)\to BG,$$ where
the first arrow classifies the universal cover of $X$ and the second one
is $B\rho$. Equivalently, if $G^{\delta}$ denotes the group $G$ with the discrete topology and $\iota:G^{\delta}\to G$ denotes the identity map, a principal $G$-bundle over $X$ is flat, if and only if it is  classified by a map $\theta: X\to BG$ which  factors through $$B\iota:BG^{\delta}\to BG.$$  We refer the reader to \cite{Kamber-Tondeur-LNM} for more details on the above facts. 

A principal $G$-bundle over $X$ is called {\sl virtually trivial}
if its pull-back to some finite covering space of $X$ is trivial. 

\emph{Under which conditions on a connected Lie group $G$ is any flat principal $G$-bundle, over any finite $CW$-complex, virtually trivial?}

A necessary condition is that each real characteristic class of  $G$ in $H^*(BG^\delta,\mathbb R)$, in the sense of \cite[p. 23]{Gromov}, of  positive degree vanishes; that is the map 
\[
	H^*(BG,\mathbb R)\to H^*(BG^{\delta},\mathbb R)
\]
induced by $B\iota$ is zero if $*>0$. This necessary condition is fulfilled if $G$ is a complex reductive
group; this follows from a result of Kamber and Tondeur \cite[Theorem 3.5]{Kamber-Tondeur}. A well-known result 
of Deligne and Sullivan states that any flat principal $GL(n,\mathbb C)$-bundle over any finite $CW$-complex is virtually trivial \cite{DS}.

Before we state our main result, we recall that the radical $R$ of a connected Lie group $G$ is its maximal connected normal
solvable subgroup. It is always a closed subgroup of $G$ but its commutator subgroup
$[R,R]$ is in general not closed in $G$.

\begin{theo}\label{thm: main} Let $G$ be a connected complex Lie group. 
	The following conditions are equivalent.
	\begin{enumerate}
		\item Any flat principal $G$-bundle over any finite $CW$-complex is virtually trivial.
		\item The map $H^*(BG,\mathbb R)\to H^*(BG^{\delta},\mathbb R)$ is zero in  positive degree.
		\item The map $H^2(BG,\mathbb R)\to H^2(BG^{\delta},\mathbb R)$ is zero.
		\item The derived subgroup $[R,R]$ of the radical $R$ of $G$ is simply connected.
	\end{enumerate}
\end{theo}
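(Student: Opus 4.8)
\emph{Overall strategy.} I would prove the cycle $(1)\Rightarrow(2)\Rightarrow(3)\Rightarrow(4)\Rightarrow(1)$; the step $(2)\Rightarrow(3)$ is trivial. For $(1)\Rightarrow(2)$, fix $c\in H^n(BG,\mathbb R)$ with $n>0$ and suppose $B\iota^*c\neq0$. Since $\mathbb R$ is an injective abelian group, $H^n(-,\mathbb R)=\mathrm{Hom}(H_n(-,\mathbb Z),\mathbb R)$, and writing $BG^\delta$ as the directed union of its finite subcomplexes $Y$ gives $H^n(BG^\delta,\mathbb R)=\varprojlim_Y H^n(Y,\mathbb R)$; hence $B\iota^*c$ is already nonzero on some finite $Y$. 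The composite $Y\hookrightarrow BG^\delta\xrightarrow{B\iota}BG$ classifies a flat $G$-bundle over the finite complex $Y$ with a nonzero real characteristic class; by $(1)$ this bundle becomes trivial over a finite cover $p\colon\widehat Y\to Y$, so this class dies under $p^*$, contradicting the injectivity of $p^*$ on real cohomology (transfer).

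\emph{$(3)\Rightarrow(4)$.} I would prove the contrapositive using only covering-group considerations. Let $\widetilde R$ be the universal cover of $R$; its deck group is $\pi_1(R)$, a finitely generated abelian group central in $\widetilde R$, and $[\widetilde R,\widetilde R]$ is a closed, simply connected subgroup of $\widetilde R$ which is the universal cover of $[R,R]$, so $\pi_1([R,R])=\pi_1(R)\cap[\widetilde R,\widetilde R]$. Assuming $(4)$ fails, pick $z\in\pi_1(R)\cap[\widetilde R,\widetilde R]$ of infinite order and $\phi\in\mathrm{Hom}(\pi_1(R),\mathbb R)$ with $\phi(z)\neq0$. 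Under the isomorphism $H^2(BR,\mathbb R)\cong\mathrm{Hom}(\pi_1(R),\mathbb R)$ (valid as $BR$ is simply connected with $H_2(BR,\mathbb Z)\cong\pi_1(R)$), the class $\phi$ is sent by $B\iota_R^*$ to the class in $H^2(BR^\delta,\mathbb R)=H^2(R^\delta,\mathbb R)$ of the abstract central extension $1\to\mathbb R\to E\to R\to1$ obtained from the universal-covering extension $1\to\pi_1(R)\to\widetilde R\to R\to1$ by pushing forward along $\phi$. This extension splits exactly when $\phi$ extends to a homomorphism $\widetilde R\to\mathbb R$; since $\widetilde R/[\widetilde R,\widetilde R]$ is abelian and $\mathbb R$ is injective, such an extension exists precisely when $\phi$ vanishes on $\pi_1(R)\cap[\widetilde R,\widetilde R]=\pi_1([R,R])$, which it does not because $\phi(z)\neq0$. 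Hence $B\iota_R^*\phi\neq0$; and since $R\hookrightarrow G$ induces an isomorphism $H^2(BG,\mathbb R)\xrightarrow{\sim}H^2(BR,\mathbb R)$ (the inclusion $\pi_1(R)\hookrightarrow\pi_1(G)$ has finite cokernel) compatibly with passage to discrete groups, $(3)$ fails.

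\emph{$(4)\Rightarrow(1)$.} This is the main implication. Given a flat $G$-bundle over a finite connected $CW$-complex $X$ with holonomy $\rho\colon\pi:=\pi_1(X)\to G$, and a maximal compact subgroup $K\subseteq G$ so that $BG\simeq BK$, I want the composite $X\to B\pi\xrightarrow{B\rho}BG\simeq BK$ to become nullhomotopic on a finite cover of $X$. I would proceed in three steps. (i) Push $\rho$ to the complex semisimple quotient $G/R$, which is linear (its $\pi_1$ being finite), and apply the arithmetic mechanism of the Deligne--Sullivan argument \cite{DS} to this linear group (after, if necessary, a finite cover of it) to make the resulting flat $(G/R)$-bundle virtually trivial. (ii) Use $(4)$: since $\pi_1([R,R])=0$, the closed subgroup $[R,R]$ equals the contractible group $[\widetilde R,\widetilde R]$, so the fibration $R\to R/[R,R]$ has contractible fibre and $R$ is homotopy equivalent to the maximal compact torus of the abelian complex Lie group $R/[R,R]$; consequently flat bundles with holonomy in $R$ have torsion real characteristic classes and become trivial on a finite cover (on which the holonomy lifts through the contractible group $\widetilde R$). (iii) Combine: pass to a common finite-index subgroup $\pi'\le\pi$ on which the push-forward to $G/R$ and the radical contribution are both trivialized, and show that the residual obstruction to nullhomotoping $X'\to BK$---carried by the extension $1\to R\to G\to G/R\to1$ and, by $(4)$, of finite order in the relevant degrees---vanishes after a further finite cover by the transfer.

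\emph{The hard part.} The obstacle is step (iii): a flat $G$-bundle is not a product of a flat $(G/R)$-bundle and a flat $R$-bundle, so one must genuinely control the reduction of structure group from $GL(N,\mathbb C)$ back through $G/R$ to $G$ and how the radical's classifying data interacts with that of the semisimple part. This is where $(4)$ enters essentially: it forces the only part of $BG\simeq BK$ not already trivialized by the $GL(N,\mathbb C)$-data to be the compact torus coming from $R/[R,R]$ together with the finite $\pi_1$ of the semisimple factor, both handled by a finite cover; if $(4)$ failed, the radical would contribute a non-simply-connected nilpotent factor carrying a genuine real degree-two obstruction---precisely the class produced in the proof of $(3)\Rightarrow(4)$.
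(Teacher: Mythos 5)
Your $(1)\Rightarrow(2)\Rightarrow(3)\Rightarrow(4)$ is fine; for $(3)\Rightarrow(4)$ you have essentially rediscovered the Goldman push-out construction that the paper cites from \cite{CCMP}. The problems are all in $(4)\Rightarrow(1)$, which is the entire content of the theorem. In step (i), ``apply the arithmetic mechanism of Deligne--Sullivan to this linear group'' is not available off the shelf: \cite{DS} treats $GL(n,\mathbb C)$, and embedding $Q=G/R$ into some $GL(N,\mathbb C)$ only trivializes the induced $GL(N,\mathbb C)$-bundle, not the $Q$-bundle, since $BQ\to BGL(N,\mathbb C)$ is not injective on homotopy. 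One has to rerun the whole argument intrinsically for $Q$, using a Chevalley integral form $G_{\mathbb Z}$ with $G_{\mathbb Z}(\mathbb C)_{\mathrm{Lie}}\cong Q$, the Friedlander--Mislin comparison of $BG_{\mathbb Z}(\overline{\mathbb F})$ with $\widehat{BQ}_\ell$, and the Kamber--Tondeur vanishing theorem to dispose of the residual prime $q=\operatorname{char}\mathbb F$; this is the paper's Lemma \ref{semisimple case} and it is genuinely where the complex-analytic hypothesis is used. In step (ii), the assertion that $[R,R]$ is closed and $R\to R/[R,R]$ is a fibration is false even under hypothesis (4): take $\widetilde R=H_3(\mathbb R)\times\mathbb R$ and divide by the central subgroup generated by $(0,1)$ and $(1,\beta)$ with $\beta$ irrational; then $[R,R]\cong\mathbb R$ is simply connected but not closed. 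Moreover the $R$-bundle that actually arises after trivializing the semisimple part on a finite cover is \emph{not flat} (it is only a lift of the classifying map through $BR\to BG$), so even a correct statement about flat $R$-bundles would not apply to it.

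Step (iii), which you acknowledge is the obstacle, is where the missing idea lives, and the mechanism you propose does not work: the transfer proves that $p^*$ is \emph{injective} on real cohomology for a finite cover, so it cannot be used to make an obstruction vanish after a further finite cover. The paper's engine is bounded cohomology: by \cite{CCMP}, condition (4) forces every class in the image of $H^2(BG,\mathbb R)\to H^2(BG^\delta,\mathbb R)$ to be bounded, and since $R^\delta$ is amenable the Gromov--Ivanov mapping theorem identifies $H^2_b(G^\delta,\mathbb R)$ with $H^2_b(Q^\delta,\mathbb R)$, so the image factors through $H^2(BQ,\mathbb R)\cong\operatorname{Hom}(\pi_1(Q),\mathbb R)=0$ because $\pi_1(Q)$ is finite (Lemma \ref{G}). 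That vanishing makes the single obstruction of the (non-flat) $R$-bundle a torsion class in $H^2(Z,\pi_1(R))$, which is then killed by passing to the cover corresponding to the torsion subgroup of $H_1(Z,\mathbb Z)$ and invoking the universal coefficient theorem (Lemma \ref{R-case}). Without the boundedness argument and without handling the non-flatness of the reduced $R$-bundle, the proposal does not close.
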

According to Got\^o \cite[Theorem 25]{Got}, a connected solvable Lie group $R$ is linear if and only if
the \emph{closure} of its derived subgroup is simply connected.
As the map between fundamental groups   
$$\pi_1\left([R,R]\right)\to\pi_1\left(\overline{[R,R]}\right),$$
induced by the inclusion, is one-to-one, it follows
that a connected complex Lie group $G$ whose radical is linear satisfies the equivalent
conditions of the theorem.
The chain of implications $(1)\Rightarrow(2)\Rightarrow(3)\Rightarrow(4)$ holds for 
\emph{any} connected Lie group. The proof of $(3)\Rightarrow(4)$ is given
in \cite[Proof of Theorem 2.2]{CCMP} and is based on a construction of Goldman \cite{Gol}. Hence, in order to prove the theorem, it is enough to
show that if $G$ is a connected complex Lie group and if the derived subgroup of the radical
of $G$ is simply connected then any flat principal $G$-bundle over any finite $CW$-complex is virtually trivial. 

The main steps in the proof are the following.
All real characteristic classes of a connected Lie group $G$ are bounded,
when viewed as classes in $H^*(BG^\delta,\mathbb R)$, if and only if
the derived subgroup of the radical of $G$ is simply connected \cite{CCMP}. Combining this fact
with Gromov's {\sl{Mapping Theorem}} \cite[Section 3.1]{Gromov}, we reduce the problem to
the case of semisimple groups. A connected complex semisimple Lie group $C$ has a unique complex
algebraic structure and there exists a Chevalley
integral group scheme $G_\mathbb{Z}$, whose set of $\mathbb{C}$-points
$G_\mathbb{Z}(\mathbb{C})$ in its Lie group topology, $G_\mathbb{Z}(\mathbb{C})_{\text{Lie}}$,
is as a complex Lie group isomorphic to $C$ (for the existence of $G_\mathbb{Z}$ see \cite{SGA};
see also \cite{FP}). As explained in \cite{FM},  this opens the way to the application
of Sullivan's completion techniques, in a similar way as in \cite{DS}: the Hasse principle
applies and solves the problem.

In Lemma \ref{amenablecase} below, we show that if $G$ is a connected amenable Lie group, then its  universal complexification $$\gamma_G: G\to G^+,$$  (see Section \ref{sec: proof of the corollary} below) is one-to-one and is a homotopy equivalence (these two properties on the universal complexification characterize
connected amenable Lie groups among connected Lie groups, but we won't need this fact).
As a consequence, Theorem \ref{thm: main} has the following corollary.
(Notice that none of the conditions in Theorem \ref{thm: main} refers to a complex structure on the Lie group.)

\begin{coro}\label{maincoro}
For a connected amenable Lie group,
the four conditions in Theorem  \ref{thm: main} are equivalent.
\end{coro}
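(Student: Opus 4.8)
The plan is to deduce the corollary from Theorem \ref{thm: main} applied to the universal complexification $G^+$ of $G$. Recall that $G^+$ is a connected complex Lie group and that, by Lemma \ref{amenablecase}, the map $\gamma_G\colon G\to G^+$ is injective and a homotopy equivalence, so that $B\gamma_G\colon BG\to BG^+$ is a homotopy equivalence as well. Since the implications $(1)\Rightarrow(2)\Rightarrow(3)\Rightarrow(4)$ hold for every connected Lie group, it suffices to prove $(4)\Rightarrow(1)$ for a connected amenable Lie group $G$.

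The first step is to show that $G$ satisfies condition $(4)$ if and only if $G^+$ does; this is where amenability is used, and I expect it to be the only part of the argument requiring genuine care. A Levi factor $L$ of $G$ is compact, so the complex subgroup of $G^+$ generated by $\gamma_G(L)$ is reductive; combined with the fact that $\gamma_G(R)$ generates a connected complex solvable normal subgroup of $G^+$ (here $R$ is the radical of $G$), this identifies the radical $R^+$ of $G^+$ with the universal complexification of $R$, the map $\gamma_G$ restricting to the canonical injection $R\to R^+$. Granting that, the equivalence of $(4)$ for $G$ and for $G^+$ becomes a matter of solvable structure theory: writing $R=\widetilde R/\Gamma$ with $\widetilde R$ simply connected and $\Gamma$ a discrete central subgroup, one has $\widetilde{R^+}=(\widetilde R)^+$ and $[(\widetilde R)^+,(\widetilde R)^+]=([\widetilde R,\widetilde R])^+$, both closed and simply connected because analytic subgroups of simply connected solvable Lie groups are closed and simply connected. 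Hence $(4)$ for $G$ amounts to $[\widetilde R,\widetilde R]\cap\Gamma=\{e\}$, and $(4)$ for $G^+$ amounts to $([\widetilde R,\widetilde R])^+\cap\gamma_{\widetilde R}(\Gamma)=\{e\}$; the two conditions coincide since $\gamma_{\widetilde R}$ realizes $\widetilde R$ as a real form of $(\widetilde R)^+$ with $\gamma_{\widetilde R}^{-1}\bigl(([\widetilde R,\widetilde R])^+\bigr)=[\widetilde R,\widetilde R]$, the last equality being seen by abelianizing and using that abelianization commutes with complexification while $\mathbb R^n\hookrightarrow\mathbb C^n$ is injective.

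With this in hand, assume $(4)$ holds for $G$, hence for $G^+$. Theorem \ref{thm: main}, applied to the connected complex Lie group $G^+$, gives that every flat principal $G^+$-bundle over a finite $CW$-complex is virtually trivial. Now let $P\to X$ be a flat principal $G$-bundle over a finite $CW$-complex, with holonomy $\rho\colon\pi_1(X)\to G$; extending its structure group along $\gamma_G$ produces a flat principal $G^+$-bundle $P^+\to X$ with holonomy $\gamma_G\circ\rho$. As $P^+$ is virtually trivial, some finite covering $p\colon X'\to X$ has $p^*P^+$ trivial. Because extension of structure group commutes with pull-back, $p^*P^+$ is the $G^+$-bundle associated to $p^*P$; so if $f\colon X'\to BG$ classifies $p^*P$, then $B\gamma_G\circ f$ classifies $p^*P^+$ and is therefore nullhomotopic, whence $f$ is nullhomotopic because $B\gamma_G$ is a homotopy equivalence. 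Thus $p^*P$ is trivial, $P$ is virtually trivial, and $(4)\Rightarrow(1)$ holds for $G$, completing the proof.
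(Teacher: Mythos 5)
Your proposal is correct and follows essentially the same route as the paper: pass to the universal complexification $G^+$, show that condition (4) for $G$ yields condition (4) for $G^+$ (your universal-cover computation of $\pi_1([R^+,R^+])$ reproduces the content of Lemma \ref{lem: + and [] commute} and its use in the paper's proof), apply Theorem \ref{thm: main} to the complex group $G^+$, and transfer the conclusion back along the homotopy equivalence $B\gamma_G$ supplied by Lemma \ref{amenablecase}. The only step you treat more loosely than the paper is the identification of the radical of $G^+$ with $R^+$ (in particular the injectivity of the canonical map $R^+\to G^+$), which the paper obtains from Hochschild's Theorem 4; citing that result closes the point without changing your argument.
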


The proof of Corollary \ref{maincoro} is explained in Section \ref{sec: proof of the corollary} below. In \cite{Gol}, Goldman proved $(1)\Leftrightarrow(4)$ for connected solvable Lie groups.
The finiteness
assumption on the $CW$-complex which is the base of the bundle is not stated explicitly in \cite{Gol} but is necessary,
as the following example shows: the flat principal $S^1$-bundle over $K(\mathbb{Q}/\mathbb{Z},1)$
with classifying map induced by the inclusion $\mathbb{Q}/\mathbb{Z}\subset S^1$
is not virtually trivial because
its classifying map $$K(\mathbb{Q/Z},1)\to BS^1=K(\mathbb{Z},2)$$ corresponds to an element
of infinite order in $$H^2(K(\mathbb{Q/Z},1),\mathbb{Z})\cong \varprojlim{\mathbb{Z}/n\mathbb{Z}}.$$
 That $(2)\Leftrightarrow(4)$ for connected solvable Lie groups follows from \cite[Theorem 1.1]{CCMP} because a solvable group with the discrete topology is amenable and the  real cohomology of amenable groups vanishes \cite{John}.

A compact Lie group is amenable as a Lie group, and  the radical 
of a compact Lie group is abelian, hence the above corollary implies the following.

\begin{coro} Let $G$ be a connected compact Lie group. Then 
any flat principal $G$-bundle over any finite $CW$-complex is virtually trivial.      
\end{coro}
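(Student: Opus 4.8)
The plan is to obtain this as an immediate consequence of Corollary \ref{maincoro}; the entire argument reduces to checking that its two standing hypotheses are satisfied by a connected compact Lie group $G$, namely that $G$ is amenable and that condition (4) of Theorem \ref{thm: main} holds for it.

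For amenability I would simply invoke the fact that every compact topological group is amenable: integration against the normalized Haar measure furnishes a bi-invariant mean on $L^\infty(G)$. Hence $G$ is amenable as a Lie group and Corollary \ref{maincoro} applies to it. For condition (4), I would appeal to the structure theory of connected compact Lie groups: the radical $R$ of $G$ — its maximal connected normal solvable subgroup — is the identity component of the center of $G$, and is therefore a torus, in particular abelian, as already noted above. Consequently $[R,R]$ is the trivial group, which is simply connected, so condition (4) is satisfied.

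Corollary \ref{maincoro} then delivers condition (1) for $G$: every flat principal $G$-bundle over any finite $CW$-complex is virtually trivial. There is no real obstacle to surmount here beyond Theorem \ref{thm: main} and Corollary \ref{maincoro} themselves; the single point that deserves care is that one must route the argument through the amenable corollary and not through Theorem \ref{thm: main} directly, since the latter is stated only for connected \emph{complex} Lie groups, whereas a connected compact Lie group — for instance $SU(2)$ — carries in general no complex Lie group structure at all.
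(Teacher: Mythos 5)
Your proof is correct and follows exactly the paper's own (one-sentence) argument: compactness gives amenability, the radical of a connected compact group is abelian so $[R,R]$ is trivial and hence simply connected, and Corollary \ref{maincoro} then yields virtual triviality. No further comment is needed.
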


A flat bundle whose holonomy is non-amenable and has no complex structure may fail
to be virtually trivial even if all its real characteristic classes vanish in positive degree.  
The cohomology ring $$H^*(BSO(2n+1),\mathbb R)$$ is generated by Pontrjagin classes
\cite[Theorem 15.9]{MilSta} and Pontrjagin classes of flat $GL(n,R)$-bundles vanish
\cite[Appendix C, Corollary 2]{MilSta} hence 
\[H^*(BSL(2n+1),\mathbb R)\to H^*(BSL^{\delta}(2n+1),\mathbb R)\]
vanishes in positive degree.
But  Millson \cite{Millson} and Deligne \cite{Deligne} have constructed, for each $n\geq 3$, flat principal $SL(n,\mathbb R)$-bundles
over finite $CW$-complexes which are not virtually trivial.

The paper is organized as follows.
In Section 2 we prove Theorem \ref{thm: main} for the case of a complex semisimple Lie group.
In Section 3
we prove a lemma closely related to Goldman's result \cite{Gol}, the main difference being
that it also applies to
bundles which are not necessarily flat.
In Section 4 we explain how the results of  the
previous sections imply Theorem \ref{thm: main} in full generality. In
Section 5 we prove Corollary \ref{maincoro}. 

\section{The complex semisimple case}
First, we fix some notation and recall some facts concerning Sullivan's
completion functor \cite{Sullivan}. Let $p$ be a prime. We can think of Sullivan's
$p$-adic completion as a functor $X\mapsto \hat{X}_p$ on the homotopy category of
connected $CW$-complexes, together with a natural transformation $X\to \hat{X}_p$
which for $X$ a simply connected $CW$-complex of finite type induces 
isomorphisms 
\[
 \pi_i(X)\otimes \hat{\mathbb{Z}}_p\to \pi_i(\hat{X}_p),\,  i\ge 2,
\] 
with
$\hat{\mathbb{Z}}_p$ denoting the ring of $p$-adic integers.
We will need the following basic fact.

\begin{lemm}\cite[Thm. 3.2]{Sullivan}.\label{Sullivan}
Let $X$ be a finite $CW$-complex and $Y$ a simply connected $CW$-complex of
finite type. A map 
$$f:X\to Y$$
 is homotopic to a constant map if and only if
for every prime $p$ the map 
$$\hat{f}_p: X\to Y\to\hat{Y}_p$$
is homotopic to a
constant map.
\end{lemm}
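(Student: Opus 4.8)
The plan is to recall the proof of this standard result of Sullivan, which is a localization-and-arithmetic-square argument. Throughout, $Y$ is simply connected of finite type, so its rationalization $Y_{\mathbb Q}$ and its $p$-completions $\hat Y_p$ fit together via Sullivan's arithmetic square: there is a homotopy pull-back diagram relating $Y$, the product $\prod_p \hat Y_p$, the rationalization $Y_{\mathbb Q}$, and the rationalization $(\prod_p \hat Y_p)_{\mathbb Q}$. The statement $f\simeq *$ is equivalent to $[f]$ being trivial in the (pointed, after choosing base points) set $[X,Y]$, where $X$ is a finite complex; finiteness of $X$ is what makes $[X,-]$ commute with the relevant products and allows one to detect triviality rationally and $p$-adically.

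First I would reduce to the case where $X$ is a finite complex with vanishing rational cohomology in positive degrees, or alternatively argue directly: the easy direction is immediate, since if $f\simeq *$ then each composite $\hat f_p: X\to Y\to \hat Y_p$ is null-homotopic. For the converse, suppose $\hat f_p\simeq *$ for every $p$. Since $X$ is finite, the natural map $[X,Y]\to \prod_p[X,\hat Y_p]$ together with $[X,Y]\to [X,Y_{\mathbb Q}]$ is, via the arithmetic square, injective on a suitable ``genus''-type set, and more precisely one shows that $[f]$ maps to zero in $[X,\hat Y_p]$ for all $p$ by hypothesis, hence $[f]$ lies in the image of the rationalization fibre; but the obstruction to lifting and the rational part are controlled by $H^*(X;\mathbb Q\otimes \pi_*(Y))$, which together with the $p$-adic vanishing forces $[f]=0$. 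Concretely, one can run the Postnikov tower of $Y$: climbing the tower, the obstructions to a null-homotopy live in $H^{n}(X;\pi_n(Y))$, a finitely generated abelian group since $X$ is finite and $Y$ of finite type; an element of such a group is zero iff it is zero rationally and $p$-adically for all $p$ (because $A\hookrightarrow (A\otimes\mathbb Q)\times\prod_p (A\otimes\hat{\mathbb Z}_p)$ for finitely generated $A$), and the rational and $p$-adic obstructions are exactly the obstructions for the rationalization and the $p$-completions of $f$. Since $\hat f_p$ is null, all $p$-adic obstructions vanish; one then needs the rational obstructions to vanish as well, which follows because rationally $Y_{\mathbb Q}$ is built from the same data and $f_{\mathbb Q}$ is detected inside $\prod_p (\hat f_p)_{\mathbb Q}$ by the arithmetic square — so if all $\hat f_p$ are trivial, $f_{\mathbb Q}$ is trivial too.

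The main obstacle I expect is the bookkeeping around base points and the fact that $[X,Y]$ is only a pointed set, not a group, at the bottom of the Postnikov tower; one handles this by inducting up the tower and using that the fibres $K(\pi_n(Y),n)$ make the successive stages into (twisted) group-like objects, so that the genuine obstruction-theoretic statement "an obstruction class vanishes iff it vanishes $p$-adically for all $p$ and rationally" applies stage by stage, and then invoking the arithmetic square to remove the separate rational hypothesis. Since this is a citation to \cite[Thm.\ 3.2]{Sullivan}, in the paper itself one would simply quote it; the sketch above indicates why it is true and why the finiteness of $X$ and finite-type simple-connectedness of $Y$ are exactly the hypotheses needed.
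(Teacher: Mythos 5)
The paper offers no proof of this lemma at all: it is quoted verbatim as \cite[Thm.\ 3.2]{Sullivan}, so simply invoking the reference is all that is required, and your closing sentence correctly recognizes this. Your sketch of why the result is true is broadly the standard argument, but two points deserve comment. First, the rational factor you carry along is unnecessary: for a finitely generated abelian group $A$ the natural map $A\to\prod_p A\otimes\hat{\mathbb{Z}}_p$ is already injective (the kernel of $A\to A\otimes\hat{\mathbb{Z}}_p$ is the prime-to-$p$ torsion), so the obstruction groups $H^n(X;\pi_n(Y))$ are detected by their $p$-completions alone, with no rational hypothesis and hence no need for the arithmetic square. This also removes the weakest step in your write-up, namely the claim that ``$f_{\mathbb{Q}}$ is detected inside $\prod_p(\hat f_p)_{\mathbb{Q}}$ by the arithmetic square'': the pull-back property of the square does not by itself give injectivity of $[X,Y_{\mathbb{Q}}]\to[X,(\prod_p\hat Y_p)_{\mathbb{Q}}]$ on classes, and as stated that step is not justified. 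Second, the genuine delicacy in any obstruction-theoretic proof is the one you gesture at under ``bookkeeping'': beyond the first stage the obstructions are not canonical but depend on the choice of null-homotopy over the previous Postnikov stage, so vanishing of the completed obstruction for \emph{some} choice on the completed side does not immediately yield vanishing for your chosen lift. Sullivan's actual proof handles this with a finiteness/compactness (inverse-limit) argument over the stages, using that $X$ is finite and $Y$ of finite type; your sketch would need this supplement to be complete. None of this affects the paper, which uses the lemma only as a black box.
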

The point here is that the space $X$ in the lemma does not need to be simply connected
(or nilpotent).


\begin{lemm}\label{semisimple case}
Let $C$ be a connected complex semisimple Lie group and $X$ a connected finite
$CW$-complex. Let $P: E\to X$ be a flat principal $C$-bundle. Then $P$ is virtually
trivial.
\end{lemm}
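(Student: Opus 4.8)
The plan is to carry out, in this semisimple setting, the argument of Deligne and Sullivan \cite{DS} as extended to reductive groups by Friedlander and Mislin \cite{FM}. First I would fix the arithmetic data. Since $C$ is connected complex semisimple it carries a unique complex algebraic structure and is isomorphic, as a complex Lie group, to $G_{\mathbb Z}(\mathbb C)_{\mathrm{Lie}}$ for a Chevalley group scheme $G_{\mathbb Z}$ over $\mathbb Z$, as recalled in the introduction. As $P$ is flat with holonomy $\rho\colon\pi_1(X)\to C$ and $X$ is finite, the group $\Gamma:=\rho(\pi_1(X))$ is finitely generated, hence $\Gamma\subset G_{\mathbb Z}(A)$ for some subring $A\subset\mathbb C$ finitely generated over $\mathbb Z$; being a subring of $\mathbb C$, $A$ is an integral domain of characteristic zero. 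The classifying map of $P$ then factors as $\theta\colon X\to B\pi_1(X)\xrightarrow{B\rho}B\Gamma^{\delta}\to BG_{\mathbb Z}(A)^{\delta}\to BG_{\mathbb Z}(\mathbb C)_{\mathrm{Lie}}=BC$, the last arrow being induced by $A\hookrightarrow\mathbb C$ followed by $B\iota$.

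Next I would reduce to a null-homotopy problem, one prime at a time. For a finite covering $q\colon X'\to X$ the bundle $q^{*}P$ is trivial if and only if $\theta\circ q$ is null-homotopic, so it is enough to produce a single finite cover $X'$ with $\theta\circ q$ null-homotopic. Since $C$ deformation retracts onto a maximal compact subgroup $K$, the space $BC\simeq BK$ is simply connected of finite type, so Lemma~\ref{Sullivan} applies: it suffices to find a finite cover $X'$ such that $X'\to BC\to\widehat{BC}_p$ is null-homotopic for \emph{every} prime $p$.

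To handle a fixed prime $p$, I would reduce $\Gamma$ modulo a maximal ideal of residue characteristic $\ell\neq p$. Choose $\ell$ not invertible in $A$ (all but finitely many primes qualify) and a maximal ideal $\mathfrak m\subset A$ with residue field $\mathbb F_q$ of characteristic $\ell$, giving a reduction $r_\ell\colon\Gamma\to G_{\mathbb Z}(\mathbb F_q)\hookrightarrow G_{\mathbb Z}(\bar{\mathbb F}_\ell)$ with finite image $\bar\Gamma_\ell$. The crucial input — the ``Hasse principle'' alluded to in the introduction, established in \cite{DS} for $GL(n,\mathbb C)$ and in \cite{FM} in general — is that for $p\neq\ell$ there is a homotopy equivalence $\widehat{\bigl(BG_{\mathbb Z}(\bar{\mathbb F}_\ell)^{\delta}\bigr)}_p\simeq\widehat{BC}_p$ compatible with the two natural maps out of $BG_{\mathbb Z}(A)^{\delta}$ (reduction mod $\mathfrak m$ on one side, inclusion into $G_{\mathbb Z}(\mathbb C)$ on the other); consequently, for $p\neq\ell$, the composite $X\xrightarrow{\theta}BC\to\widehat{BC}_p$ is homotopic to $X\to B\Gamma^{\delta}\xrightarrow{Br_\ell}B\bar\Gamma_\ell^{\delta}\to\widehat{BC}_p$. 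Letting $X_\ell\to X$ be the finite covering attached to $\ker\bigl(\pi_1(X)\xrightarrow{r_\ell\rho}\bar\Gamma_\ell\bigr)$, the map $X_\ell\to B\bar\Gamma_\ell^{\delta}=K(\bar\Gamma_\ell,1)$ is trivial on $\pi_1$, hence null-homotopic, so $X_\ell\to BC\to\widehat{BC}_p$ is null-homotopic for all $p\neq\ell$. Repeating with a second prime $\ell'\neq\ell$ gives a finite cover $X_{\ell'}\to X$ with $X_{\ell'}\to\widehat{BC}_p$ null-homotopic for all $p\neq\ell'$; taking $X'$ to be a connected component of $X_\ell\times_X X_{\ell'}$, every prime $p$ satisfies $p\neq\ell$ or $p\neq\ell'$, so $X'\to\widehat{BC}_p$ is null-homotopic, and Lemma~\ref{Sullivan} gives that $\theta|_{X'}$ is null-homotopic, i.e.\ $P$ pulls back to a trivial bundle over $X'$.

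The main obstacle is exactly the input used in the third step: identifying, $p$-adically and compatibly with specialization along $\mathrm{Spec}(A)$, the classifying space of the discrete group $G_{\mathbb Z}(\bar{\mathbb F}_\ell)$ with $BC$ for $p\neq\ell$. This rests on Quillen's computation of $K_{*}(\bar{\mathbb F}_\ell;\mathbb Z/p)$ and on Friedlander's comparison theorems relating the \'etale homotopy type of $BG_{\bar{\mathbb F}_\ell}$, the discrete classifying space $BG_{\mathbb Z}(\bar{\mathbb F}_\ell)^{\delta}$, and $BC$ at primes $p\neq\ell$, together with a smooth base-change argument along a connected model of $\mathrm{Spec}(A)$; it may be necessary first to replace $A$ by a suitable localization so that this model is smooth over some $\mathbb Z[1/N]$, which affects neither $\Gamma$ nor $\theta$. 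Everything else — the two-prime trick, the reduction to a finite group, and the passage to a finite cover — is formal once this comparison is in hand.
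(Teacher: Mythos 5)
Your argument is correct, but it follows the original two-prime strategy of Deligne--Sullivan \cite{DS} rather than the paper's one-prime variant, so the two proofs diverge after the common setup. Both proofs reduce $\rho$ into $G_{\mathbb Z}(\Lambda)$ for a finitely generated subring of $\mathbb C$, both pass to the finite cover killing the image of the holonomy in a finite reduction, both invoke the Friedlander--Mislin comparison \cite{FM} to identify the $p$-completed classifying space of the reduced (locally) finite group with $\widehat{BC}_p$ away from the residue characteristic, and both conclude with Lemma~\ref{Sullivan} (which is what the paper calls the Hasse principle -- your use of that name for the \cite{FM} comparison is a harmless mislabel). The difference is how the remaining prime is handled: you reduce at two maximal ideals of distinct residue characteristics $\ell\neq\ell'$ and pass to a common finite cover, so that every prime $p$ is covered by one of the two reductions; the paper instead uses a single reduction, choosing the residue characteristic $q$ prime to the torsion of $\bigoplus_i H^i(X,\pi_{i-1}(C))$, and disposes of the prime $p=q$ by obstruction theory together with Kamber--Tondeur's theorem \cite{Kamber-Tondeur} that flat bundles with reductive structure group have vanishing rational characteristic classes (so the obstructions are $q$-torsion classes living in a group that is torsion-free by the choice of $q$). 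Your route buys independence from the Kamber--Tondeur input and from the careful choice of residue characteristic, at the cost of a second reduction and a possibly larger covering space; the paper's route gets by with one congruence cover but uses flatness a second time through \cite{Kamber-Tondeur}. One point to make precise when writing your proof up in full is the compatibility, after $p$-completion, of the reduction map and the complex embedding out of $BG_{\mathbb Z}(A)^{\delta}$: as in the paper, this is most cleanly done by interpolating a strict Henselization $A\subset H\subset\mathbb C$ with residue field $\overline{\mathbb F}_\ell$ and quoting the homotopy-commutative square on p.~432 of \cite{FM}, rather than the smooth base-change formulation you sketch; with that reference in place your proof is complete.
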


\begin{proof}
We can assume that $C$ is isomorphic to $G_\mathbb{Z}(\mathbb{C})_\text{Lie}$
for some Chevalley group scheme $G_\mathbb{Z}$. 
Let $\psi: \overline{X}\to X\to BC$ be the classifying map for $P$ pulled back to
a finite covering space $\overline{X}$ of $X$. Because $BC$ is simply connected and
of finite type and $\overline X$ is a finite complex, we can prove that $\psi$ is homotopic to
a constant map for a particular $\overline{X}$ by showing that for every prime $p$, the map 
$\hat{\psi}_p: \overline
{X}\to \widehat{BC}_p$ into the $p$-adic completion of $BC$ is
homotopic to a constant map (Lemma \ref{Sullivan}). 
Let $\pi$ be the fundamental
group of $X$ and $\rho: \pi\to C= G_\mathbb{Z}(\mathbb{C})_{\text{Lie}}$
the holonomy of the bundle $P$. Since $\pi$
is finitely generated, there exist a subring $\Lambda\subset\mathbb{C}$ of
finite type over $\mathbb{Z}$ such that the image of $\rho$ is contained
in $G_\mathbb{Z}(\Lambda)$. Choose a maximal ideal $\mathfrak{m}\subset\Lambda$ such
that the finite field $\mathbb{F}=\Lambda/\mathfrak{m}$ has characteristic $q$ different
from any torsion prime occurring in the finite torsion subgroup of $\bigoplus_i H^i(X, \pi_{i-1}(C))$. Let $\overline{\mathbb{F}}$
be an algebraic closure of $\mathbb{F}$ and $H\subset\mathbb{C}$ a strict Henselization of $\Lambda$ in 
$\mathbb{C}$, with residue field ${\overline{\mathbb{F}}}$.
We then obtain a diagram of group homomorphisms
$$\xymatrix{
\pi\ar[r]\ar@/_/[drr]_\phi&G_\mathbb{Z}(\Lambda)\ar[r]&G_\mathbb{Z}(H)\ar[d]\ar[r]&G_\mathbb{Z}(\mathbb{C})\ar[r]&C\\
&&G_\mathbb{Z}(\overline{\mathbb{F}})&&
}$$
such that the image of the composite map $\phi:\pi\to G_\mathbb{Z}({\overline{\mathbb{F}}})$ is finite,
because $\pi$ is finitely generated and $G_\mathbb{Z}({\overline{\mathbb{F}}})$ is a locally finite group. Let $\overline{X}$
be the finite covering space of $X$ corresponding to the kernel of $\phi$. We will show that the bundle P
pulled back to $\overline{X}$ is trivial.  Let $\psi: \overline{X}\to BC$ be the
classifying map for that bundle. For every prime $\ell$ different from
the characteristic
$q$ of $\mathbb{F}$ the map $\hat{\psi}_\ell: \overline{X}\to \widehat{BC}_\ell$ is homotopically trivial, because up to
homotopy it can be factored through the homotopically trivial map $\overline{X}\to BG_\mathbb{Z}(H)\to
BG_\mathbb{Z}(\overline{\mathbb{F}})$, using natural maps
$$\xymatrix{\overline{X}\ar[r]&
BG_\mathbb{Z}(\Lambda)\ar[r]&BG_\mathbb{Z}(H)\ar[d]\ar[r]&BG_\mathbb{Z}(\mathbb{C})
\ar[r]&{\widehat{BC}}_\ell\ar[d]^\simeq\\
&&BG_\mathbb{Z}(\overline{\mathbb{F}})\ar[rr]&&((BG_{\overline{\mathbb{F}}})_{\text{et}})^{\hat{}}_\ell
}$$
%
For the maps and notation see page 432 of \cite{FM}. It remains to deal with the prime $\ell=q$: we
need to show that $\hat{\psi}_q:\overline{X}\to \widehat{BC}_q$ is homotopically trivial too.
Because the bundle $P$ is flat with connected complex reductive structure
group, the rational characteristic classes of $P$ are all zero (the map $H^*(BC,\mathbb{Q})\to
H^*(X,\mathbb{Q})$ is zero in positive degrees cf.~Theorem 3.5 of
Kamber-Tondeur \cite{Kamber-Tondeur}). As a result, the obstructions to trivializing
the $\hat{C}_q$-fibration classified by $X\to \widehat{BC}_q$, are all $q$-torsion.
These obstructions lie in the group $\bigoplus_i H^i({X},\pi_{i-1}(C)\otimes\hat{\mathbb{Z}}_q)$, but this
group is torsion-free by the choice of $q$. We conclude that the map $\hat{\psi}_q: \overline{X}\to
\widehat{BC}_q$ is homotopically trivial and the Hasse principle (Lemma \ref{Sullivan})
applied to the map $\psi: \overline{X}\to BC$
implies therefore that $\psi$ must be homotopically trivial too.

\end{proof}

\section{Bundles with solvable holonomy}
The following is a characterization of virtually trivial principal bundles
over finite connected $CW$-complexes, in case the structural group is a connected solvable
Lie group. It can be viewed as a variation of a theorem due to Goldman \cite{Gol}, but
without assuming that the bundle in question is flat.

\begin{lemm}{\label{R-case}}
Let $R$ be a solvable connected Lie group and $P: E\to X$ a principal
$R$-bundle over the connected $CW$-complex $X$, with $\psi: X\to BR$ the
classifying map. Assume that $H_1(X,\mathbb{Z})$ is finitely generated.
Then the bundle $P$ is virtually trivial
if and only if $\psi^*: H^2(BR,\mathbb{R})\to H^2(X,\mathbb{R})$
is the 0-map.
\end{lemm}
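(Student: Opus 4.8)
The plan is to reduce the statement to the homotopy type of $BR$. A connected solvable Lie group $R$ has a maximal compact subgroup $K$, which is connected (since $R$ is) and compact connected solvable, hence a torus $T^r$ with $r=\dim K$; moreover $R/K$ is diffeomorphic to a Euclidean space. Since the homotopy fibre of $BK\to BR$ is $R/K$, this map is a homotopy equivalence, so
\[
BR\simeq BT^r\simeq K(\mathbb{Z}^r,2).
\]
Under this identification the classifying map $\psi$ corresponds to a tuple $(c_1,\dots,c_r)\in H^2(X,\mathbb{Z})^r$; the bundle $P$ is trivial exactly when all the $c_i$ vanish, and $\psi^*\colon H^2(BR,\mathbb{R})\to H^2(X,\mathbb{R})$ is the zero map exactly when each $c_i$ maps to $0$ under $H^2(X,\mathbb{Z})\to H^2(X,\mathbb{R})$. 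So it suffices to show that each $c_i$ dies in real cohomology if and only if the $c_i$ are simultaneously killed in integral cohomology by pulling back along some finite covering of $X$.

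For the ``only if'' direction I would use the transfer. If $p\colon X'\to X$ is a finite covering of degree $d$ over which $P$ becomes trivial, then $p^*c_i=0$, and applying the transfer $p_!$ (which satisfies $p_!p^*=d\cdot\mathrm{id}$) gives $d\,c_i=0$; hence each $c_i$ is torsion, so it vanishes in $H^2(X,\mathbb{R})$. This direction uses neither solvability of $R$ (beyond the shape of $BR$) nor finiteness of $H_1(X,\mathbb{Z})$.

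For the ``if'' direction, assume each $c_i$ dies in $H^2(X,\mathbb{R})$. By the universal coefficient theorem the kernel of $H^2(X,\mathbb{Z})\to H^2(X,\mathbb{R})$ is $\operatorname{Ext}(H_1(X,\mathbb{Z}),\mathbb{Z})$, which is finite since $H_1(X,\mathbb{Z})$ is finitely generated; hence there is an integer $n\ge 1$ with $n\,c_i=0$ for all $i$. Now use the fibration with fibre $K((\mathbb{Z}/n)^r,1)$ over $K(\mathbb{Z}^r,2)$ coming from multiplication by $n$ on $\pi_2$. Since the composite of $\psi$ with multiplication by $n$ classifies $n(c_1,\dots,c_r)=0$, it is null-homotopic, so $\psi$ lifts through the fibre to a map $X\to K((\mathbb{Z}/n)^r,1)$, that is, to a homomorphism $\bar\psi\colon\pi_1(X)\to(\mathbb{Z}/n)^r$. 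Let $X'\to X$ be the finite covering corresponding to the finite-index subgroup $\ker\bar\psi$. Then the composite $X'\to X\to K((\mathbb{Z}/n)^r,1)$ is classified by the zero homomorphism, hence null-homotopic, hence so is the composite $X'\to X\to BR$ induced by $\psi$; thus $P$ is trivial over $X'$, and $P$ is virtually trivial.

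The only genuinely non-formal ingredient is the identification $BR\simeq K(\mathbb{Z}^r,2)$; after that the argument is standard obstruction theory. I expect the care to be needed mainly in keeping straight, along this homotopy equivalence, the three notions ``trivial bundle'', ``null classifying map'' and ``vanishing integral class''. The finite generation of $H_1(X,\mathbb{Z})$ is used at exactly one step, to force a class dying in real cohomology to be torsion; without it $\operatorname{Ext}(H_1(X,\mathbb{Z}),\mathbb{Z})$ can contain elements of infinite order (for instance when $H_1(X,\mathbb{Z})=\mathbb{Q}$), and, by the transfer argument above, such a class can never be destroyed by a finite covering.
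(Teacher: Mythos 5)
Your proof is correct and takes essentially the same route as the paper: identify $BR\simeq K(\mathbb{Z}^r,2)$ via the maximal torus, get necessity from the transfer (injectivity of $H^2(X,\mathbb{R})\to H^2(\overline{X},\mathbb{R})$ for finite covers), and use finite generation of $H_1(X,\mathbb{Z})$ to see the classifying classes are torsion before killing them on a finite abelian cover. The only (cosmetic) difference is in the last step: the paper kills the torsion obstruction via naturality of the universal coefficient sequence on the cover associated to a surjection $\pi_1(X)\to\mathrm{Tor}\,H_1(X,\mathbb{Z})$, while you lift $\psi$ through the Bockstein fibration $K((\mathbb{Z}/n)^r,1)\to K(\mathbb{Z}^r,2)\xrightarrow{n} K(\mathbb{Z}^r,2)$ and pass to the cover given by $\ker\bar\psi$.
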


\begin{proof} 
If $\psi^*\neq 0$ then for any finite covering space $\pi:\overline{X}\to X$ the composition
$$H^2(BR,\mathbb{R})\stackrel{\psi^*}{\rightarrow} H^2(X,\mathbb{R})\stackrel{\pi^*}{\rightarrow} H^2(\overline{X},\mathbb{R})$$
is non-zero too, because $\pi^*: H^*(X,\mathbb{R})\to H^*(\overline{X},\mathbb{R})$ is injective.
It follows that $P$ cannot pull back to a trivial bundle on some finite covering
space of $X$. Conversely, assume that $\psi^*=0$. Because
$R$ is homotopy equivalent 
to a maximal compact subgroup $T\subset R$, $T$ a torus, 
$BR$ is homotopy equivalent to $K(\mathbb{Z}^n,2)$
where $\pi_1(T)\cong \mathbb{Z}^n$. It follows that there is a single obstruction
$\omega\in H^2(X,\pi_1(R))$ to the existence of a section for $P$. Because
of our assumption on $X$, the kernel of the natural map $H^2(X,\mathbb{Z})\to H^2(X,\mathbb{R})$ is 
finite (isomorphic to the torsion subgroup of $H_1(X,\mathbb{Z})$) and thus the
hypothesis that $\psi^*=0$
implies that $\omega$ must be a torsion class. From the
universal coefficient theorem we see that therefore
$$\omega\in {\text{Ext}}(H_1(X,\mathbb{Z}),\pi_1(R))\rightarrowtail H^2(X,\pi_1(R))\twoheadrightarrow
\text{Hom}(H_2(X,\mathbb{Z}),\pi_1(R)).$$
Let $\text{Tor}\subset H_1(X,\mathbb{Z})$ be the finite torsion subgroup and choose a surjection
$\theta: \pi_1(X)\to\text{Tor}$. Let $f: \overline{X}\to X$ denote the covering space corresponding
to the kernel of $\theta$. It follows that 
$$f^*(\omega)=0\in \text{Ext}(H_1(\overline{X},\mathbb{Z}),\pi_1(R))\,.$$ 
But $f^*(\omega)$ is
the only obstruction to the existence of a section for the principal $R$-bundle
$f^*P: f^*E\to \overline{X}$, showing that $f^*P$ is trivial and
thus completing the proof of the lemma.
\end{proof}


\section{Proof of Theorem \ref{thm: main}}

We will need the following two auxiliary results.

\begin{lemm}{\label{R}} Let $R$ be a solvable connected Lie group and
let $P: E\to Z$ be a principal $R$-bundle over the finite connected complex $Z$,
classified by $\kappa: Z\to BR$. Let $G$ be a connected Lie group containing
$R$ as a normal, closed subgroup and denote by $\iota:R\to G$ the
inclusion. Assume that the
principal $G$-bundle over $Z$ classified by $(B\iota)\circ \kappa :Z\to BG$
satisfies $\kappa^*\circ (B\iota)^*=0: H^2(BG,\mathbb{R})\to H^2(Z,\mathbb{R})$.
Then the principal $R$-bundle $P$ is virtually trivial.
\end{lemm}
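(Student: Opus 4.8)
The plan is to reduce the statement to Lemma \ref{R-case} by showing that the degree-$2$ hypothesis imposed on the ambient group $G$ descends to the normal subgroup $R$. Precisely, I would first isolate the following claim: the map
\[
(B\iota)^*\colon H^2(BG,\mathbb R)\longrightarrow H^2(BR,\mathbb R)
\]
is \emph{surjective}. Granting this, the hypothesis $\kappa^*\circ(B\iota)^*=0\colon H^2(BG,\mathbb R)\to H^2(Z,\mathbb R)$ forces $\kappa^*=0$ on $H^2(BR,\mathbb R)$. Since $Z$ is a finite complex, $H_1(Z,\mathbb Z)$ is finitely generated, so Lemma \ref{R-case}, applied to the principal $R$-bundle $P\colon E\to Z$ with classifying map $\kappa$, yields that $P$ is virtually trivial. (Note that solvability of $R$ is used only at this final step.)

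To prove surjectivity of $(B\iota)^*$ I would pass to fundamental groups. For any connected Lie group $\Gamma$ the space $B\Gamma$ is simply connected with $\pi_2(B\Gamma)\cong\pi_1(\Gamma)$; by the Hurewicz theorem $H_2(B\Gamma,\mathbb Z)\cong\pi_1(\Gamma)$, and since $\mathbb R$ is an injective $\mathbb Z$-module the universal coefficient theorem gives a natural isomorphism $H^2(B\Gamma,\mathbb R)\cong\operatorname{Hom}(\pi_1(\Gamma),\mathbb R)$. Under these identifications $(B\iota)^*$ becomes precomposition with $\pi_1(\iota)\colon\pi_1(R)\to\pi_1(G)$. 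As $\operatorname{Hom}(-,\mathbb R)$ is exact, it carries an injection of abelian groups to a surjection, so it suffices to show that $\pi_1(\iota)\colon\pi_1(R)\to\pi_1(G)$ is injective.

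For this last point I would use that $R$ is closed and normal, so that $G/R$ is again a Lie group and $R\to G\to G/R$ is a fibre bundle, hence a fibration. Its long exact homotopy sequence contains
\[
\pi_2(G/R)\longrightarrow\pi_1(R)\longrightarrow\pi_1(G),
\]
and $\pi_2$ of a (finite–dimensional) Lie group vanishes by the classical theorem of Cartan; therefore $\pi_1(R)\to\pi_1(G)$ is injective, completing the proof.

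There is no serious obstacle here: once one records the vanishing of $\pi_2(G/R)$, everything is formal homotopy theory and the universal coefficient theorem. The only genuinely load-bearing observation is the reduction itself — that the assumed triviality of the degree-$2$ real characteristic class of the $G$-bundle forces triviality of the corresponding class of the $R$-bundle — which is exactly the surjectivity of $(B\iota)^*$ established above.
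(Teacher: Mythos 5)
Your proposal is correct and follows essentially the same route as the paper: both reduce to Lemma \ref{R-case} by showing that $(B\iota)^*\colon H^2(BG,\mathbb R)\to H^2(BR,\mathbb R)$ is surjective, via the identification $H^2(B\Gamma,\mathbb R)\cong\operatorname{Hom}(\pi_1(\Gamma),\mathbb R)$ and the injectivity of $\pi_1(R)\to\pi_1(G)$ coming from the vanishing of $\pi_2$ of a connected Lie group applied to the fibration $R\to G\to G/R$.
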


\begin{proof}
Let $Q=G/R$. Since for any connected Lie group the
second homotopy group vanishes and the fundamental group is abelian,
we have a short exact sequence of abelian groups
$$0\to \pi_1(R)\to \pi_1(G)\to \pi_1(Q)\to 0,$$ inducing a split short exact sequence
of $\mathbb{R}$-vector spaces
$$0\to {\text{Hom}}(\pi_1(Q),\mathbb{R})\to {\text{Hom}}(\pi_1(G),\mathbb{R})\stackrel{\Phi}\to
{\text
{Hom}}(\pi_1(R),\mathbb{R})\to 0\,.$$
For any connected Lie group $L$, the group $H_2(BL,\mathbb{R})$ is naturally
isomorphic to $H_1(L,\mathbb{R})\cong \pi_1(L)\otimes\mathbb{R}$. It follows
that the natural map $(B\iota)^*:H^2(BG,\mathbb{R})\to H^2(BR,\mathbb{R})$ corresponds
to the surjective map $\Phi$. Therefore, the vanishing of

$$\kappa^*\circ (B\iota)^*: H^2(BG,\mathbb{R})\to H^2(Z,\mathbb{R})$$
implies the vanishing of 
$$\kappa^*: H^2(BR,\mathbb{R})\to H^2(Z,\mathbb{R})\,.$$
Using Lemma $\ref{R-case}$ we conclude that the principal
$R$-bundle
$P$ is virtually trivial.
\end{proof}

\begin{lemm}\label{G}
Let $G$ be a connected Lie group and let $R$ be its radical.
Suppose that its derived group $[R,R]$ is simply connected in its Lie
group topology and that $G/R$ has a finite fundamental group.
Let $G^\delta$ denote the group $G$ with the discrete
topology. Then the identity map on the underlying sets $ \iota_G:G^\delta\to G$
induces the zero map $\iota_G^*: H^2(BG,\mathbb{R})\to H^2(BG^\delta,\mathbb{R})$.
\end{lemm}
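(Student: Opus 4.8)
The plan is to factor $\iota_G$ through the quotient by $R$ up to a bounded correction, reducing the statement to two facts: that characteristic classes factoring through $G/R$ are controlled by the semisimple case (Lemma \ref{semisimple case}), and that the ``radical part'' is controlled by Lemma \ref{R-case}/Lemma \ref{R}. Concretely, let $Q=G/R$ and consider the fibration $BR\to BG\to BQ$. In degree $2$, the Serre spectral sequence together with the identification $H^2(BL,\mathbb R)\cong\mathrm{Hom}(\pi_1(L),\mathbb R)$ for a connected Lie group $L$ (already used in the proof of Lemma \ref{R}) gives a short exact sequence
\[
	0\to H^2(BQ,\mathbb R)\to H^2(BG,\mathbb R)\xrightarrow{(B\iota)^*} H^2(BR,\mathbb R)\to 0,
\]
where the first map comes from $BG\to BQ$ and the surjectivity on the right is the map $\Phi$ of Lemma \ref{R}. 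So any class $u\in H^2(BG,\mathbb R)$ splits (after choosing a splitting of $\Phi$) as a sum of a class pulled back from $BQ$ and a class which, restricted to $BR$, generates $H^2(BR,\mathbb R)$; it suffices to show $\iota_G^*$ kills each summand.

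First I would handle the summand pulled back from $BQ$. Here $\iota_G^*$ of such a class is the pullback along $BG^\delta\to BQ^\delta\to BQ$, i.e.\ it is $\iota_Q^*$ of a class in $H^2(BQ,\mathbb R)$ precomposed with the map on classifying spaces induced by $G^\delta\to Q^\delta$. Now $Q$ is a connected Lie group with \emph{finite} fundamental group by hypothesis, so $H^2(BQ,\mathbb R)\cong\mathrm{Hom}(\pi_1(Q),\mathbb R)=0$; hence this summand is already zero in $H^2(BG,\mathbb R)$ and there is nothing to do. (This is where the hypothesis ``$G/R$ has finite fundamental group'' enters, and it is precisely what makes the degree-$2$ statement so much easier than the all-degrees statement, which genuinely needs the complex semisimple input of Section 2.)

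It then remains to treat a class $u\in H^2(BG,\mathbb R)$ whose restriction to $BR$ is arbitrary; equivalently, I must show that for \emph{every} finite $CW$-complex $Y$ and every map $g:Y\to BG$, the class $g^*u$ vanishes in $H^2(Y,\mathbb R)$ — then applying this to the finite skeleta of $BG^\delta$ gives $\iota_G^*u=0$ (an $H^2$ class is detected on the $3$-skeleton, which is finite for $BG^\delta$ when... — more carefully, since $H^2(BG,\mathbb R)$ is finite-dimensional one works with a single class and checks vanishing of $\iota_G^*u$ by checking it on all maps from finite complexes into $BG^\delta$, which is exactly the flatness setup). Given such $g:Y\to BG$, it classifies a (flat, if $Y=BG^\delta$-skeleton) principal $G$-bundle; let $P_R$ be the associated principal $R$-bundle obtained by... actually $R\subset G$ is normal so a $G$-bundle does not directly reduce to $R$, but the composite $Y\to BG\to BQ$ classifies a $Q$-bundle, and since $\pi_1(Q)$ is finite and $Y$ is finite this $Q$-bundle is virtually trivial by Lemma \ref{semisimple case} applied to the semisimple part together with a finite-cover argument. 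Replacing $Y$ by that finite cover, the $G$-bundle reduces to $R$, giving a principal $R$-bundle $P_R:E\to Y$ with $\kappa:Y\to BR$ such that $(B\iota)\circ\kappa\simeq g$. Then $\kappa^*\circ(B\iota)^*=g^*$ on $H^2$, and since $g^*u$ restricted appropriately is governed by $\kappa^*:H^2(BR,\mathbb R)\to H^2(Y,\mathbb R)$, I invoke Lemma \ref{R} (whose hypotheses are exactly: $R$ solvable connected, $[R,R]$ simply connected is not needed for \ref{R} itself but the virtual triviality we extract does use it) to conclude $P_R$ is virtually trivial, hence $\kappa^*=0$ on $H^2(BR,\mathbb R)$, hence $g^*u=0$.

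The main obstacle, and the step I would be most careful about, is the reduction of the structure group from $G$ to $R$: a priori a principal $G$-bundle need not reduce to its normal subgroup $R$, and one only gets a reduction after passing to a finite cover where the induced $Q=G/R$-bundle is trivial. Making this precise requires (i) Lemma \ref{semisimple case} — or rather its consequence for connected Lie groups with finite $\pi_1$ on the quotient — to trivialize the $Q$-bundle on a finite cover, and (ii) checking that the reduced $R$-bundle's classifying map $\kappa$ genuinely satisfies $(B\iota)\circ\kappa\simeq g$ so that the hypothesis $\kappa^*\circ(B\iota)^*=0$ of Lemma \ref{R} is available. Once that bookkeeping is done, the vanishing of $\iota_G^*$ in degree $2$ follows by assembling the two summands above.
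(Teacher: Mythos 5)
There is a genuine gap — in fact two. First, your reduction of the structure group from $G$ to $R$ rests on the claim that the induced flat $Q$-bundle ($Q=G/R$) over the finite complex becomes trivial on a finite cover "since $\pi_1(Q)$ is finite". Lemma \ref{semisimple case} cannot be invoked here: it concerns \emph{complex} semisimple groups, whereas in Lemma \ref{G} the group $G$ is an arbitrary connected Lie group, so $Q$ is only a real semisimple group with finite fundamental group. Finiteness of $\pi_1(Q)$ does not give virtual triviality of flat $Q$-bundles: the Millson--Deligne examples recalled in the introduction are flat $SL(n,\mathbb{R})$-bundles ($n\ge 3$, $\pi_1$ finite) over finite complexes that are not virtually trivial. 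So the reduction to an $R$-bundle $\kappa$ with $(B\iota)\circ\kappa\simeq g$ is simply not available in the generality of the lemma. Second, even granting that reduction, your use of Lemma \ref{R} is circular: its hypothesis is exactly $\kappa^*\circ(B\iota)^*=0$ on $H^2(BG,\mathbb{R})$, i.e.\ the vanishing $g^*u=0$ you are in the middle of proving. Lemmas \ref{R-case} and \ref{R} deduce virtual triviality \emph{from} the degree-two vanishing; you cannot run them backwards to produce $\kappa^*=0$ out of nothing.

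A symptom of the problem is that your argument never genuinely uses the hypothesis that $[R,R]$ is simply connected, yet without it the conclusion of Lemma \ref{G} is false (this is the content of $(3)\Rightarrow(4)$ in Theorem \ref{thm: main}, valid for every connected Lie group). The paper's proof uses that hypothesis at the decisive point: by Theorem 1.1 of \cite{CCMP}, simple connectivity of $[R,R]$ forces every class $\iota_G^*(x)$ to be \emph{bounded}, i.e.\ in the image of $H^2_b(G^\delta,\mathbb{R})\to H^2(BG^\delta,\mathbb{R})$. Since $R^\delta$ is amenable, the Gromov--Ivanov mapping theorem identifies $H^*_b(G^\delta,\mathbb{R})$ with $H^*_b(Q^\delta,\mathbb{R})$; in the split case one then writes $\iota_G^*x=\pi^*_\delta(\iota_Q^*\sigma^*_{top}x)$, which vanishes because $H^2(BQ,\mathbb{R})\cong\operatorname{Hom}(\pi_1(Q),\mathbb{R})=0$ (this is where your correct observation about $\pi_1(Q)$ finite actually enters), and the non-split case is reduced to the split one by pulling the extension back over $\tilde Q$ and using that the resulting finite kernel is $\mathbb{R}$-acyclic. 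You would need an input of this bounded-cohomology type (or some substitute exploiting $[R,R]$ simply connected) for the argument to go through; the obstruction-theoretic route through Lemmas \ref{R-case}/\ref{R} alone cannot deliver it.
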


\begin{proof}
There is a short exact sequence of Lie groups
\begin{equation}
R\stackrel{\iota}{\rightarrow} G\stackrel{\pi}{\rightarrow} Q
\end{equation}
with $R$ the radical of $G$ and $Q$ semisimple.

{\sl Split case}. We first assume that the short exact sequence $(1)$
is split, with $\sigma:Q\to G$ a splitting. For a discrete group $D$
we write $H^*_b(D,\mathbb{R})$ for its bounded real cohomology and we 
denote by
$$\theta_D: H^*_b(D,\mathbb{R})\to H^*(D,\mathbb{R})$$
the forgetful map.
Because $R^\delta$ is an amenable discrete group, the inflation map
$$\pi_b^*: H^*_b(Q^\delta,\mathbb{R})\to H^*_b(G^\delta,\mathbb{R})$$ 
is an isomorphism (cf.~Ivanov \cite{Ivanov}*{Theorem 3.8.4}, see also 
Gromov's {\sl{Mapping Theorem}} \cite{Gromov}*{Section 3.1}).

Therefore, the induced maps
$$\pi^*_b: H^*_b(Q^\delta,\mathbb{R})\to H^*_b(G^\delta,\mathbb{R}),\quad  \sigma^*_b:
H^*_b(G^\delta,\mathbb{R})\to H^*_b(Q^\delta,\mathbb{R})$$
are inverse isomorphisms. We write
$$
\pi^*_\delta: H^*(Q^\delta,\mathbb{R})\to H^*(G^\delta,\mathbb{R}),\quad  \sigma_\delta^*:
H^*(G^\delta,\mathbb{R})\to H^*(Q^\delta,\mathbb{R})$$
and
$$\pi^*_{top}: H^*(BQ,\mathbb{R})\to H^*(BG,\mathbb{R}),\quad  \sigma^*_{top}:
H^*(BG,\mathbb{R})\to H^*(BQ,\mathbb{R})$$
for the maps induced by $\sigma$ respectively $\pi$ in these cohomology groups.

We then have a commutative diagram
$$\xymatrix{
H^2(BG,\mathbb{R})\ar[d]^{\iota_G^*}&H^2(BQ,\mathbb{R})\ar[l]_{\pi^*_{top}}\ar[d]^{\iota_Q^*=0}\\
H^2(BG^\delta,\mathbb{R})&H^2(BQ^\delta,\mathbb{R})\ar[l]_{\pi^*_\delta} \\
H^2_b(G^\delta,\mathbb{R})\ar[u]_{\theta_G}& H^2_b(Q^\delta,\mathbb{R}).\ar[u]_{\theta_Q} \ar[l]_{\pi^*_b}^{\cong}}
$$
Let $x\in H^2(BG,\mathbb{R})$. We need to show that $\iota_G^*(x)=0$. Since $[R,R]$ is simply connected, 
$\iota_G^*(x)$ is bounded, meaning that it lies in the image of $\theta_G$ (see Theorem 1.1 of \cite{CCMP}).
By assumption, $\pi_1(Q)$ is finite. Thus $H^2(BQ,\mathbb{R})\cong\operatorname{Hom}(\pi_1(Q),\mathbb{R})=0$
which implies that $\iota_Q^*=0$ in the diagram above.
Choose $y$ such that $\theta_G  (y)=\iota_G^*(x)$. Because $y=\pi^*_b \sigma^*_b(y)$, we have
$$
\iota_G^*x=\theta_G y=\theta_G(\pi^*_b \sigma^*_b y)=\pi_\delta^*\theta_Q\sigma_b^*(y)=
\pi^*_\delta\sigma^*_\delta(\theta_G y)=
$$
$$
=\pi^*_\delta (\sigma^*_\delta \iota^*_G x)=\pi^*_\delta(\iota_Q^*\sigma^*_{top}x)=0\,,
$$
because $\iota_Q^*=0$.

{\sl{Non-split case}}. Suppose that the exact sequence $(1)$ is non-split. 
Let $\tilde{Q}\to Q$ be the universal cover.
The pull-back of $G\to Q$ over $\tilde{Q}$
yields a short exact sequence of Lie groups
\begin{equation}
R\to \bar{G}\to \tilde{Q}
\end{equation}
which is split because $\tilde{Q}$ is simply connected (see Lemma 14 of {\cite{CMPS}}). The natural
map $p:\bar{G}\to G$ is a surjective homomorphism of connected Lie groups with finite kernel $K$
isomorphic to $\pi_1(Q)$.
Since $BK$ is $\mathbb{R}$-acyclic, the induced maps 
$$p^*_{top}:H^*(BG,\mathbb{R})\stackrel{\cong}{\rightarrow} H^*(B\bar{G},\mathbb{R})\;\ \text{and}\; \
p^*_\delta: H^*(BG^\delta,\mathbb{R})\stackrel{\cong}{\rightarrow} H^*(B\bar{G}^\delta,\mathbb{R})$$ are isomorphisms.
From the {\sl{split case}} we infer that $\iota^*_{\bar{G}}: H^2(B\bar{G},\mathbb{R})\to H^2(B\bar{G}^\delta,\mathbb{R})$
is the zero map, and thus the corresponding map $\iota_G^*$ is zero too.
\end{proof}

{\bf Proof of Theorem \ref{thm: main}}.

\noindent
Let $G$ be a connected complex Lie group. Its radical $R$ is
a complex Lie subgroup and $G/R$ is complex semisimple and has therefore a finite fundamental group
\cite[Chapter XVII, Theorem 2.1]{H}. As explained in the introduction, in order to prove the theorem, it is enough to assume that $[R,R]$ is simply connected 
(in its Lie group topology) and to 
show that if $P: E\to X$ is a flat principal $G$-bundle
over a connected finite complex $X$, with classifying map $\alpha: X\to BG$,
then
there is a finite connected covering space $\beta:Y\to X$, such that the bundle $P$ pulled back to $Y$ is
trivial, i.e. such that the map $\alpha\circ\beta :Y\to BG$ is homotopic to a constant map. Let $p:G\to Q$ be the projection
and put $\gamma=Bp:BG\to BQ$. Then
the map $\gamma\circ\alpha: X\to BQ$ classifies a
principal $Q$-bundle over $X$ which is flat because $P$ is flat and the diagram
\[\xymatrix{
BG^{\delta}\ar[d]\ar[r]&BQ^{\delta}\ar[d]&\\
BG\ar[r]&BQ\\
}
\]
commutes. By Lemma \ref{semisimple case} we can find
a finite connected covering space $\delta: Z\to X$ such that the bundle classified by $\gamma\circ\alpha\circ\delta:
Z\to BQ$ is trivial. The lifting property of the fibration $BR\to BG\to BQ$
implies that   $\alpha\circ\delta: Z\to BG$ factors through $\epsilon=Bi:BR\to BG$, where
$i:R\to G$ stands for the inclusion. In other words, there is a map $\kappa:Z\to BR$,
with $\epsilon\circ\kappa: Z\to BG$ homotopic to $\alpha\circ\delta: Z\to BG$. We claim that the
(not necessarily flat) principal $R$-bundle classified by $\kappa : Z\to BR$ is virtually trivial. By Lemma \ref{R}
it suffices to show that
$$(\epsilon\circ\kappa)^*=(\alpha\circ\delta)^*=0: H^2(BG,\mathbb{R})\to H^2(Z,\mathbb{R})\;.$$
As $P$ is flat, $\alpha^*: H^2(BG,\mathbb{R})\to H^2(X,\mathbb{R})$ factors through 
$H^2(BG^\delta,\mathbb{R})$
and
since by assumption $[R,R]$ is simply connected and $Q$ is complex semisimple, Lemma \ref{G} applies and implies that
$$H^2(BG,\mathbb{R})\to H^2(BG^\delta,\mathbb{R})$$
is the zero map. Thus $(\alpha\circ\delta)^*=0$ and therefore,
by Lemma \ref{R},
the bundle classified by $\kappa: Z\to BR$ is virtually trivial.
We now choose a finite connected covering space $\mu: Y\to Z$ on which the $R$-bundle pulls back to a
trivial bundle, i.e. $\kappa\circ\mu\simeq *$. It then follows that the original $G$-bundle over $X$ pulls back
to the trivial bundle over the finite covering space $\beta=\delta\circ\mu:Y\to X$.

The following diagram, with commuting squares up to homotopy, depicts, for the convenience of the reader, the maps described above:

$$\xymatrix{
Y\ar[d]^\mu\ar[r]\ar@/_1pc/[dd]_\beta&\{*\}\ar[d]&\\
Z\ar[r]^\kappa\ar[d]^\delta&BR\ar[d]^\epsilon&\\
X\ar[r]^\alpha&BG\ar[r]^\gamma&BQ\;.
}
$$\noindent
\hfill$\square$

\section{Proof of Corollary \ref{maincoro}}\label{sec: proof of the corollary}

We first recall some facts on the complexification of a connected
Lie group $G$. We follow the notation used in Hochschild {\cite{Hochschild}, 
(see also Bourbaki \cite{Bou}*{Chapter III, \S 6, Prop. 20}).
To any Lie group corresponds a  complex Lie group $G^+$ and
a homomorphism of Lie groups $$\gamma_G: G\to G^+,$$ called the universal complexification
of $G$,
with the property that, for every continuous homomorphism $\eta$ of $G$ into a complex Lie group $H$, there is one and only one complex analytic homomorphism
$\eta^+:G^+\to H$ such that $\eta=\eta^+\gamma_G$. 
In general $\gamma_G$ is
not injective. Its kernel is a central (not necessarily discrete) subgroup of $G$.
Let $R<G$ denote the radical of
the connected Lie group $G$ and
$L<G$ a Levi subgroup (a maximal connected semisimple subgroup). Then $G={RL}$ and in case $L<G$ is closed,
the kernel of $\gamma_G$ coincides with the kernel of $\gamma_L$ and is discrete in $G$ (see \cite{Hochschild}*{Theorem 4}). Also,
if $G$ is linear, $\gamma_G$ is injective and for $G$ compact, $\gamma_G$ maps $G$ isomorphically
onto a maximal compact subgroup of $G^+$. Therefore, for compact $G$, the map $\gamma_G$ is a homotopy
equivalence. As explained in \cite{Hochschild}, in the case $R$ is a connected solvable Lie group (not necessarily linear), $\gamma_R$ is
injective and induces an isomorphism between fundamental groups $\pi_1(R)\to\pi_1(R^+)$.
The universal covers of the solvable Lie groups $R$ and $R^+$ being contractible, it follows that $\gamma_R: R\to R^+$
is a homotopy equivalence.

\begin{lemm}{\label{amenablecase}}
Let $G$ be a connected amenable Lie group. Then the complexification map
$\gamma_G: G\to G^+$ is one-to-one and a homotopy equivalence.
\end{lemm}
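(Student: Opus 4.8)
The plan is to reduce the structure of a connected amenable Lie group $G$ to pieces on which the behaviour of the universal complexification is already understood, namely the radical and a maximal compact subgroup of the Levi factor. First I would recall the structure theory: write $G = RL$ with $R$ the radical and $L$ a Levi subgroup. For a connected \emph{amenable} Lie group, every semisimple subgroup is compact, so $L$ is a compact connected semisimple group; in particular $L$ is closed in $G$, so the results quoted from Hochschild \cite{Hochschild}*{Theorem 4} apply and tell us that $\ker\gamma_G = \ker\gamma_L$, discrete in $G$. But $L$ is compact, hence linear, so $\gamma_L$ is injective; therefore $\gamma_G$ is injective. This disposes of the first assertion.

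For the homotopy-equivalence statement, the idea is to compare the fibration-like decomposition of $G$ with that of $G^+$. On the radical, $\gamma_R : R \to R^+$ is a homotopy equivalence (both have contractible universal covers and $\gamma_R$ induces an isomorphism on $\pi_1$, as recalled just before the lemma). On the Levi factor, $L$ is compact and $\gamma_L$ embeds $L$ as a maximal compact subgroup of $L^+$, so $\gamma_L$ is also a homotopy equivalence. The universal complexification is functorial and takes the (not necessarily direct) product decomposition $G = RL$ to $G^+ = R^+ L^+$; more precisely, $\gamma_G$ restricts to $\gamma_R$ on $R$ and to $\gamma_L$ on $L$, and $R^+$ is normal in $G^+$ with $G^+/R^+$ a quotient of $L^+$. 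I would set up the commutative ladder of the two fibrations
\[
\xymatrix{
R \ar[r] \ar[d]_{\gamma_R} & G \ar[r] \ar[d]_{\gamma_G} & G/R \ar[d]_{\bar\gamma} \\
R^+ \ar[r] & G^+ \ar[r] & G^+/R^+
}
\]
and invoke the five lemma for homotopy groups once I know the two outer vertical maps are equivalences. The base $G/R \cong L/(L\cap R)$ is compact, and $G^+/R^+$ is its complexification; since $L\cap R$ is a (finite, central) subgroup of the compact group $L$, the map $\bar\gamma$ is again the inclusion of a maximal compact subgroup of a complex group, hence a homotopy equivalence.

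The main obstacle I expect is bookkeeping around the fibration $R^+ \to G^+ \to G^+/R^+$: one must check that $R^+$ really is closed and normal in $G^+$ with the quotient being $(G/R)^+$, and that the sequence $1 \to R \to G \to G/R \to 1$ of Lie groups maps to it compatibly under $\gamma$. This is essentially contained in Hochschild's analysis (and Bourbaki \cite{Bou}*{Chapter III, \S 6, Prop. 20}), but it requires care because $\gamma_R$ need not be a closed embedding a priori; amenability (forcing $L$ compact, hence closed, with discrete $\ker\gamma_G$) is exactly what makes this clean. Once the ladder is in place, comparing the long exact sequences of homotopy groups — using $\pi_i = 0$ for $i \geq 2$ on all the solvable pieces and the known behaviour of compact groups and their complexifications — gives that $\gamma_G$ induces isomorphisms on all homotopy groups, and since both $G$ and $G^+$ have the homotopy type of $CW$-complexes, Whitehead's theorem finishes the proof.
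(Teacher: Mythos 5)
Your proposal is correct and follows essentially the same route as the paper: injectivity of $\gamma_G$ via the compact (hence closed and linear) Levi subgroup together with Hochschild's Theorem~4, and the homotopy equivalence via the ladder comparing the fibration sequences $R\to G\to Q$ and $R^+\to G^+\to Q^+$, with $\gamma_R$ and $\gamma_Q$ known to be homotopy equivalences. The one step you defer as ``bookkeeping'' --- that $R^+$ sits in $G^+$ as its radical and that $G^+/R^+\cong Q^+$ --- is exactly what the paper settles, by quoting \cite{Hochschild}*{Theorem 4} for the first point and by a short direct verification of the universal property of the complexification for the second.
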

\begin{proof}
A connected Lie group $G$ is amenable if and only if it fits
	in a short exact sequence 
	$$\{1\}\to R\to G\to Q\to\{1\},$$
where $R$ denotes the radical of $G$ and the quotient $Q$ is compact semisimple \cite{Zimmer}*{Corollary 4.1.9}.	
Let $L<G$ be a Levi subgroup. Since $G/R=Q$ is compact
and semisimple, its fundamental group is finite. 
Thus $L\to Q$, induced by the projection $G\to Q$, is a finite covering
space
and it follows that $L$ is compact, thus closed in $G$. Moreover, $L$ is linear
and we conclude that $\gamma_L: L\to L^+$ is one-to-one. According to \cite[Theo\-rem 4, $(2)\Leftrightarrow (5)$]{Hochschild},
we conclude that
$\gamma_G$ is injective too. 
Consider the commutative diagram
$$\xymatrix{
R \ar@{^{(}->} [d]^{\gamma_R}_\simeq\ar[r]^{\iota}&G\ar[r]^{\pi}\ar@{^{(}->}[d]^{\gamma_G}&Q\ar@{^{(}->}[d]^{\gamma_Q}_\simeq\\
\;R^+\ar[r]^{\iota^+}&\;G^+\ar[r]^{\pi^+}&\;Q^+\;.\\
}
$$
As remarked above, $\gamma_R$ and $\gamma_Q$ are injective maps and homotopy equivalences.
By \cite{Hochschild}*{Theorem 4}, $\iota^+$ maps $R^+$ isomorphically onto the
radical of $G^+$. We claim that $G^+/R^+$
it is isomorphic to $Q^+$. To see this, we need to verify that this quotient has the universal 
property of $Q^+$. Let $\nu: G\to G^+\to G^+/R^+$ be the natural map. Since $R\subset\ker\nu$, we obtain
an natural map $\overline{\nu}: Q\to G^+/R^+$. Let $f:Q\to C$ be an
analytic homomorphism into a complex Lie group C. Then $f\circ\pi: G\to Q\to C$ is trivial on 
$R$ and extends therefore uniquely to a complex analytic homomorphism $G^+\to C$
which vanishes on $R^+$. It follows that the original map $f$ factors uniquely through
$\overline{\nu}: Q\to G^+/R^+$, showing that $G^+/R^+\cong Q^+$.
Note that both horizontal lines in the diagram above are fibration sequences.
We conclude that $\gamma_G$ must be a homotopy equivalence too.
\end{proof}

The following lemma is a general fact about universal complexifications of connected solvable Lie groups. It will be useful
in the proof of Corollary \ref{maincoro}.

\begin{lemma}\label{lem: + and [] commute} Let $R$ be a connected solvable Lie group. Then 
	\[
		[R,R]^+=[R^+,R^+].
	\]
That is, the universal complexification of the derived subgroup of $R$ is isomorphic
to the derived subgroup of the universal complexification of $R$.	
\end{lemma}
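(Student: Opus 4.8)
The plan is to exploit the universal property of $(-)^+$ together with the facts, recalled earlier in the excerpt, that for a connected solvable $R$ the map $\gamma_R:R\to R^+$ is injective and that $R^+$ is again a connected solvable complex Lie group (being the complexification of a solvable group, its Lie algebra is $\mathfrak r\otimes_{\mathbb R}\mathbb C$, which is solvable). First I would construct the map $[R,R]^+\to[R^+,R^+]$: the composite $[R,R]\hookrightarrow R\xrightarrow{\gamma_R}R^+$ has image inside $[R^+,R^+]$ since $\gamma_R$ is a homomorphism and sends commutators to commutators, and $[R^+,R^+]$ is a complex Lie subgroup of $R^+$ (for a connected solvable complex Lie group the derived subgroup is closed, hence a complex Lie subgroup — this uses solvability, in contrast to the general situation flagged in the introduction); so by the universal property of the complexification of $[R,R]$ we get a canonical complex analytic homomorphism $u:[R,R]^+\to[R^+,R^+]$.

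Next I would produce a map in the other direction. The inclusion $[R,R]\hookrightarrow R$ complexifies to $[R,R]^+\to R^+$, and I would argue that its image is exactly $[R^+,R^+]$. For surjectivity onto $[R^+,R^+]$: the image of $[R,R]$ under $\gamma_R$ generates a subgroup whose closure contains $[\gamma_R(R),\gamma_R(R)]$; since $\gamma_R(R)$ is "complex-dense" in $R^+$ in the appropriate sense (the Lie algebra of $R^+$ is the complexification of that of $R$, and $R^+$ is connected), the complex Lie subgroup generated by $\gamma_R([R,R])$ — which is the image of $[R,R]^+\to R^+$ — has Lie algebra containing $[\mathfrak r,\mathfrak r]\otimes\mathbb C=[\mathfrak r\otimes\mathbb C,\mathfrak r\otimes\mathbb C]$, i.e.\ all of $\operatorname{Lie}([R^+,R^+])$; connectedness then gives that the image is all of $[R^+,R^+]$. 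So $u$ is surjective.

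Finally I would check $u$ is injective. Here the key tool is that $\gamma_R:R\to R^+$ is injective and a homotopy equivalence, so in particular $\pi_1([R,R])\to\pi_1(R^+)$ and the Lie algebra map are well controlled. Concretely: $[R,R]^+$ and $[R^+,R^+]$ are connected solvable complex Lie groups, $u$ is a surjective complex analytic homomorphism inducing an isomorphism on Lie algebras (by the computation above $\dim_{\mathbb C}[R,R]^+=\dim_{\mathbb C}[\mathfrak r,\mathfrak r]\otimes\mathbb C=\dim_{\mathbb C}[R^+,R^+]$, using that $\gamma_{[R,R]}$ is injective so $[R,R]^+$ has the expected dimension), hence $u$ is a covering homomorphism onto $[R^+,R^+]$ with discrete central kernel. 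To see the kernel is trivial I would compare fundamental groups: the composite $[R,R]\xrightarrow{\gamma_{[R,R]}}[R,R]^+\xrightarrow{u}[R^+,R^+]\hookrightarrow R^+$ equals $\gamma_R|_{[R,R]}$, which is injective, and on $\pi_1$ the map $\pi_1([R,R])\to\pi_1(R^+)$ factors through $\pi_1([R,R]^+)\to\pi_1([R^+,R^+])$; using that $\gamma_R$ is a homotopy equivalence and tracking the long exact sequences of the fibrations $[R,R]\to R\to R/[R,R]$ and $[R^+,R^+]\to R^+\to R^+/[R^+,R^+]$ (the quotients being the complexifications of one another on the abelian level, where $(-)^+$ is transparent) forces $\pi_1([R,R]^+)\to\pi_1([R^+,R^+])$ to be an isomorphism, so the covering $u$ has trivial kernel and is an isomorphism.

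The main obstacle I expect is the injectivity of $u$ — equivalently, showing the covering $[R,R]^+\to[R^+,R^+]$ is trivial. Surjectivity and the Lie algebra isomorphism are essentially formal consequences of the universal property plus the structure of solvable complexifications, but ruling out a nontrivial (central, discrete) kernel requires genuinely invoking that $\gamma_R$ is injective and a homotopy equivalence on the level of the solvable group $R$ and transporting that information to $[R,R]$; the cleanest route is the fundamental-group bookkeeping sketched above, and care is needed because $[R,R]$ need not be closed in $R$ while $[R^+,R^+]$ is closed in $R^+$, so one must phrase the comparison via the Lie algebras and the homotopy equivalence rather than via naive subgroup inclusions.
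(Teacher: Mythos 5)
Your first half coincides with the paper: you build $u\colon[R,R]^+\to[R^+,R^+]$ from the universal property and use $[\mathfrak r\otimes\mathbb C,\mathfrak r\otimes\mathbb C]=[\mathfrak r,\mathfrak r]\otimes\mathbb C$ to see that $u$ is a local isomorphism, hence a covering of the connected group $[R^+,R^+]$; this is exactly the paper's $\eta^+$. One repair is needed even here: your parenthetical claim that the derived subgroup of a connected solvable \emph{complex} Lie group is closed is false. Take the complex Heisenberg group $H$ with centre $Z\cong\mathbb C$ and divide $H\times\mathbb C^*$ by the discrete central subgroup generated by $(z_0,w_0)$ with $z_0\in Z$ nonzero and $w_0$ of modulus one and infinite order: the derived subgroup of the quotient is the image of $Z\times\{1\}$, which winds densely in a strictly larger subgroup. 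This does not hurt the construction of $u$, because $[R^+,R^+]$ is in any case an immersed connected complex Lie subgroup (the integral subgroup of the complex ideal $[\mathfrak r_{\mathbb C},\mathfrak r_{\mathbb C}]$), and the universal property applies to the continuous homomorphism $[R,R]\to[R^+,R^+]$ taken with the intrinsic topologies; but it does undermine the later step where you lean on that closedness.

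The genuine gap is in the injectivity step, which you rightly identify as the crux. Your plan compares the long exact sequences of the ``fibrations'' $[R,R]\to R\to R/[R,R]$ and $[R^+,R^+]\to R^+\to R^+/[R^+,R^+]$, but neither sequence exists in general: $[R,R]$ need not be closed in $R$ (as you concede), and by the example above $[R^+,R^+]$ need not be closed in $R^+$ either, so the quotients need not be Lie groups; moreover the identification of $R^+/[R^+,R^+]$ with $(R/[R,R])^+$, which you call transparent, would itself require a universal-property argument of the kind the paper carries out elsewhere. So the fundamental-group bookkeeping that is supposed to kill the discrete kernel of $u$ cannot be carried out as sketched. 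The paper's proof supplies precisely the missing reduction: pass to the universal cover $\tilde R$, where $[\tilde R,\tilde R]$ is closed and simply connected; in that case $[R^+,R^+]$ is simply connected because $\pi_1([R^+,R^+])=\pi_1(R^+)\cap[R^+,R^+]$ (Lemma 6 of \cite{CMPS}) and $R^+$ is simply connected, so the covering $u$ is an isomorphism; then, for general $R$, one identifies $\pi_1([R,R])=\pi_1(R)\cap[\tilde R,\tilde R]$ and $\pi_1([R^+,R^+])=\pi_1(R^+)\cap[\tilde R^+,\tilde R^+]$ inside the centre of $\tilde R^+=(\tilde R)^+$, and uses $\pi_1(R)=\pi_1(R^+)$, a Lie-algebra inclusion, and the simply connected case already proved to conclude that these subgroups coincide, whence the kernel of $u$ is trivial. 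Some such detour through the simply connected cover, where all the relevant subgroups are closed, is the idea your argument is missing.
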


\begin{proof}
	As $R$ is solvable, $\gamma_R$ is one-to-one hence so is its restriction
	$\eta: [R,R]\to [R^+,R^+]$  to $[R,R]$. The universal property of 
	\[
		\gamma_{[R,R]}:[R,R]\to [R,R]^+,
	\]  
	implies the existence of a complex analytic homomorphism 
	\[
	   \eta^+:[R,R]^+\to [R^+,R^+], 
	\]
	such that $\eta^+\gamma_{[R,R]}=\eta$. Taking derivatives at the identities
	and using the fact that for any real Lie algebra $\frak{r}$ we have 
	\begin{equation}\label{equ: equation}
		[\frak{r}\otimes\mathbb C,\frak{r}\otimes\mathbb C]=[\frak{r},\frak{r}]\otimes\mathbb C,
	\end{equation} 
	we deduce that $\eta^+$ is a local isomorphism, hence a covering homomorphism.
	This proves the lemma in the case $R$ is simply connected. Indeed, 
	the inclusion $\gamma_R: R\to R^+$ is a homotopy equivalence, hence $R^+$ is also simply connected, and according to 
	\cite[Lemma 6]{CMPS} we have
	$$\pi_1([R^+,R^+])=\pi_1(R^+)\cap[R^+,R^+].$$
	This shows that $[R^+,R^+]$ is also simply connected, hence                                                                          
	$\eta^+$ is a global isomorphism.  
	To handle the general case, let us show that the discrete kernel of $\eta^+$ is trivial.
	To that end, we show that  the natural embeddings of fundamental
	groups in the centers of universal covers coincide. Let $\tilde{R}$ be the universal cover of $R$. It is obvious from the construction of the universal complexification that the universal cover $\tilde{R^+}$ of $R^+$ coincides with $(\tilde{R})^+$. We have:
	\begin{align*}
		   \pi_1([R,R]^+)=&\pi_1([R,R])=\pi_1(R)\cap[\tilde{R},\tilde{R}]\\
		                 =&\pi_1(R)\cap[\tilde{R},\tilde{R}]^+=\pi_1(R^+)\cap[\tilde{R},\tilde{R}]^+\\
		=&\pi_1(R^+)\cap[(\tilde{R})^+,(\tilde{R})^+]=\pi_1(R^+)\cap[\tilde{R^+},\tilde{R^+}]\\
=&\pi_1([R^+,R^+]).		
	\end{align*}
	The first equality (as well as the fourth one) is true because the embedding of a connected solvable Lie group in its universal
	complexification is a homotopy equivalence, the second equality (as well as the last one) is a general fact
	(see \cite[Lemma 6]{CMPS}) about
	closed normal subgroups in Lie groups, the third equality follows from the fact that 
	$[\tilde{R},\tilde{R}]^+\cap\tilde{R}\subset [\tilde{R},\tilde{R}]$ which is deduced
	from the corresponding inclusion between Lie algebras. The fifth equality is true because we have already proved the lemma for simply connected solvable Lie groups hence $[\tilde{R},\tilde{R}]^+=[(\tilde{R})^+,(\tilde{R})^+]$. 
\end{proof}

{\bf Proof of Corollary \ref{maincoro}.}
As explained in the introduction, in order to prove that the four conditions are equivalent,
it is enough to show that if $G$ is a  connected amenable Lie group with radical $R$ such that
$[R,R]$ is simply connected, then any flat principal $G$-bundle over a finite $CW$-complex is virtually trivial.
As we have observed earlier,
the complexfication map $R\to R^+$ is injective and, applying again \cite[Theo\-rem 4, $(2)\Leftrightarrow (5)$]{Hochschild}, we see that as $G/R$ is compact semisimple, $R^+$ maps isomorphically onto the radical of $G^+$. 
As we have seen in the course of the proof of Lemma \ref{amenablecase},
the map $\gamma_G$ restricted to $R$ agrees with $\gamma_R$. Thus, as $[R,R]$ is simply connected by hypothesis, and as $[R^+,R^+]/\gamma_G[R,R]$ is simply connected according 
to \cite[Theorem 3]{Hochschild} and Lemma \ref{lem: + and [] commute}, we deduce that 
$[R^+,R^+]$ is simply
connected too. Let $f:X\to BG$ classify a flat principal $G$-bundle over the finite connected
$CW$-complex $X$. Since $[R^+,R^+]$ is the commutator subgroup of the radical of $G^+$
and $[R^+,R^+]$ is simply connected, we conclude by Theorem \ref{thm: main} that the flat bundle classified by $B\gamma_G\circ f:
X\to BG^+$ is virtually trivial. Because $B\gamma_G$ is a homotopy equivalence (Lemma \ref{amenablecase}), the bundle
classified by $f$ is virtually trivial too.
\hfill$\square$

\begin{bibdiv}
\begin{biblist}
	


	\bib{Bou}{book}{
	   author={Bourbaki, Nicolas},
	   title={Lie groups and Lie algebras. Chapters 1--3},
	   series={Elements of Mathematics},
	   note={Translated from the French;
	   Reprint of the 1989 English translation},
	   publisher={Springer-Verlag},
	   place={Berlin},
	   date={1998},
	   pages={xviii+450},
	}	
	
	

\bib{CMPS}{article}{
		   author={Chatterji, Indira},
		   author={Mislin, Guido},
		   author={Pittet, Christophe},
		   author={Saloff-Coste, Laurent},
		   title={A geometric criterion for the boundedness of characteristic classes},
		   journal={Math. Ann.},
		   volume={351},
		   date={2011},
		   number={3},
		   pages={541--569},
		   }
		   
		   \bib{CCMP}{article}{
		   author={Chatterji, Indira},
		   author={de Cornulier, Yves},
		   author={Mislin, Guido},
		   author={Pittet, Christophe},
		   title={Bounded characteristic classes and flat bundles; to appear in JDG},
		}
		
		\bib{Deligne}{article}{
		   author={Deligne, Pierre},
		   title={Extensions centrales non r\'esiduellement finies de groupes
		   arithm\'etiques},
		   language={French, with English summary},
		   journal={C. R. Acad. Sci. Paris S\'er. A-B},
		   volume={287},
		   date={1978},
		   number={4},
		   pages={A203--A208},
		}

		\bib{DS}{article}{
		author={Deligne, Pierre}, author={Sullivan, Dennis},
		title={Fibr\'es vectoriels complexes \`a groupe structural discret},
		journal={C. R. Acad. Sc. Paris, S\'erie A}, volume={281}, pages={1081--1083},
		}

	\bib{SGA}{article}{
		   author={Demazure, Michel},
		   author={Grothendieck, Alexander},
		   title={Sch\'emas en Groupes III},
		   journal={Lecture Notes in Mathematics, Springer-Verlag, Volume},
		   volume={153},
		   date={1970},
	}

		\bib{FM}{article}{
		   author={Friedlander, Eric M.},
		author={Mislin, Guido},
				title={Locally finite approximation of Lie groups, I},
		   journal={Invent. math.},
		   volume={83},
		   date={1986},
		   pages={425-436},
}
 	\bib{FP}{article}{
		   author={Friedlander, Eric M.},
		author={Parshall, Brian},
				title={Etale cohomology of reductive groups},
		   journal={Lecture Notes in Mathematics, Springer-Verlag, Volume},
		   volume={854},
		   date={1981},
		   pages={127-140},
		   }

	\bib{Gol}{article}{
	   author={Goldman, William M.},
	   title={Flat bundles with solvable holonomy. II. Obstruction theory},
	   journal={Proc. Amer. Math. Soc.},
	   volume={83},
	   date={1981},
	   number={1},
	   pages={175--178},
	}   
	
	\bib{Got}{article}{
	   author={Got{\^o}, Morikuni},
	   title={Faithful representations of Lie groups. II},
	   journal={Nagoya Math. J.},
	   volume={1},
	   date={1950},
	   pages={91--107},
	}
		\bib{Gromov}{article}{
	   author={Gromov, Michael},
	   title={Volume and bounded cohomology},
	   journal={Inst. Hautes \'Etudes Sci. Publ. Math.},
	   date={1983},
	   number={56},
	   pages={5--99},
	}
	
	\bib{H}{book}{
	author={Gerhard Hochschild},
	title={The Structure of Lie Groups},
	series={Holden-Day Series Mathematics}
	date={1965}
	}
	
	\bib{Hochschild}{article}{
	author={Gerhard Hochschild},
	title={Complexification of real analytic groups},
	journal={Trans. Amer. Math. Soc.},
	volume={125},
	number={3},
	date={1966},
	pages={406-413}
	}
	
		\bib{Ivanov}{article}{
	   author={Ivanov, Nikolai V.},
	   title={Foundations of the theory of bounded cohomology, (Russian, English summary)},
	   journal={Zap. Nauchn. Sem. Leningrad. Otdel. Mat. Inst. Steklov. (LOMI)},
	   volume={143},
	   date={1985},
	   pages={69-109, 177-178},
	}
	
	\bib{Kamber-Tondeur-LNM}{book}{
	   author={Kamber, Franz},
	   author={Tondeur, Philippe},
	   title={Flat manifolds},
	   series={Lecture Notes in Mathematics, No. 67},
	   publisher={Springer-Verlag},
	   place={Berlin},
	   date={1968},
	   pages={iv+53},
	}
	
	\bib{Kamber-Tondeur}{article}{
	   author={Kamber, Franz},
	   author={Tondeur, Philippe},
	   title={The characteristic homomorphism of flat bundles},
	   journal={Topology},
	   number={6},
	   date={1967},
   pages={153--159},
}
	

	\bib{John}{book}{
	   author={Johnson, Barry Edward},
	   title={Cohomology in Banach algebras},
	   note={Memoirs of the American Mathematical Society, No. 127},
	   publisher={American Mathematical Society},
	   place={Providence, R.I.},
	   date={1972},
	   pages={iii+96},
	}

\bib{Millson}{article}{
author={Millson, John},
title={Real vector bundles with discrete structure group},
journal={Topology},
volume={18},
date={1979},
pages={83-89},
} 

\bib{MilSta}{book}{
   author={Milnor, John W.},
   author={Stasheff, James D.},
   title={Characteristic classes},
   note={Annals of Mathematics Studies, No. 76},
   publisher={Princeton University Press},
   place={Princeton, N. J.},
   date={1974},
   pages={vii+331},
}
	

	\bib{Sullivan}{article}{
	  author={Sullivan, Dennis},
	   title={Genetics of homotopy theory and the Adams conjecture},
	   journal={Ann. of Math.},
	   volume={100},
	   date={1974},
	   pages={1-89},
	}

	\bib{Zimmer}{book}{
	   author={Zimmer, Robert J.},
	   title={Ergodic Theory and Semisimple Groups},
	   series={Monographs in Mathematics, Birkh\" auser},
	   date={1984},
	}
	
\end{biblist}
\end{bibdiv}
\end{document}